\newcommand{\bC}{\mathbb C}
\newcommand{\bN}{\mathbb N}
\newcommand{\bP}{\mathbb P}
\newcommand{\bZ}{\mathbb Z}
\newcommand{\cA}{\mathcal A}
\newcommand{\cB}{\mathcal B}
\newcommand{\cC}{\mathcal C}
\newcommand{\cE}{\mathcal E}
\newcommand{\cI}{\mathcal I}
\newcommand{\cK}{\mathcal K}
\newcommand{\cL}{\mathcal L}
\newcommand{\cN}{\mathcal N}
\newcommand{\cO}{\mathcal O}
\newcommand{\cP}{\mathcal P}
\newcommand{\cZ}{\mathcal Z}
\newcommand{\lra}{\longrightarrow}
\newcommand{\Gammahol}{\Gamma_{\mathrm{hol}}}
\newcommand{\Gammacon}{\Gamma_{\cC^0}}
\newcommand{\Gammaconfib}{\Gamma_{\cC^0, \mathrm{fib}}}
\newcommand{\Pic}{\mathrm{Pic}}
\newcommand{\Hol}{\mathrm{Hol}}
\newcommand{\Map}{\mathrm{Map}}
\newcommand{\Hom}{\mathrm{Hom}}
\newcommand{\Alg}{\mathrm{Alg}}
\DeclareMathOperator{\unrelProj}{Proj}
\newcommand{\Proj}{\underline{\unrelProj}}
\DeclareMathOperator{\Sym}{Sym}
\DeclareMathOperator{\Spec}{Spec}
\newcommand{\NS}{\mathrm{NS}}
\newcommand{\catSch}{\mathsf{Sch}^\mathrm{ft}_\bC}
\newcommand{\catSet}{\mathsf{Set}}
\newcommand{\opp}{\mathrm{op}}
\DeclareMathOperator{\rel}{rel}
\newcommand{\defeq}{\mathrel{\vcentcolon=}}
\NewDocumentCommand{\set}{somm}{%
   \IfNoValueTF{#2}
    {\IfBooleanTF{#1}{\{#3 \mid #4\}}{\mleft\{ #3 \mathrel{}\middle\vert\mathrel{} #4 \mright\}}}
    {\mathopen{#2\{}#3 \mathrel{}#2\vert\mathrel{} #4\mathclose{#2\}}}%
  }
\newtheorem{theorem}{Theorem}[section]
\newtheorem*{theorem*}{Theorem}
\newtheorem{theorem-in-progress}[theorem]{Theorem-in-progress}
\newtheorem*{conjecture*}{Conjecture}
\newtheorem{definition}[theorem]{Definition}
\newtheorem{lemma}[theorem]{Lemma}
\newtheorem{remark}[theorem]{Remark}
\newtheorem{proposition}[theorem]{Proposition}
\newtheorem{example}[theorem]{Example}
\newtheorem{maintheorem}{Theorem}
\newcommand{\stacks}[1]{\cite[\href{https://stacks.math.columbia.edu/tag/#1}{Tag #1}]{stacks-project}}
\title{The topology of spaces of holomorphic maps to projective space}
\renewcommand\maketitle{
{\raggedright
\begin{center}
    {\large\bfseries\MakeUppercase\@title}\\[0.6em]
    {\scalebox{0.8}{BY}}\\[0.6em]
    {\small\MakeUppercase{Alexis Aumonier}}\\[0.6em]
    {\footnotesize\textit{Cambridge, England}}
\end{center}
}
}
\address{Center for Mathematical Sciences, Wilberforce Road, Cambridge CB3 0WB, UK}
\email{aa2099@cam.ac.uk}
\begin{document}

\maketitle

\section{Introduction}

Let $X$ be a connected smooth complex projective variety of dimension $n \geq 1$, and let $\bP^m$ denote the complex projective space of dimension $m \geq n$. Recall that $H^2(\bP^m;\bZ) \cong \bZ \cdot h$ is a free abelian group generated by an element $h$ Poincar\'e dual to a hyperplane. We define the degree of a map $f \colon X \to \bP^m$ to be the cohomology class $f^*(h) \in H^2(X;\bZ)$. In this paper we prove:
\begin{maintheorem}\label{thm:main-intro}
Let $\alpha \in H^2(X;\bZ)$ be such that $\alpha - c_1(K_X)$ is ample\footnote{We write $K_X$ for the canonical line bundle, and recall that ampleness is a numerical property by the Nakai--Moishezon criterion.}. Then the inclusion
\[
    \Hol_\alpha(X,\bP^m) \hookrightarrow \Map_\alpha(X,\bP^m),
\]
of the subspace of holomorphic maps of degree $\alpha$ inside the space of continuous maps of the same degree, induces an integral homology isomorphism in the range of degrees $* < (2m-2n+1) \cdot d(\alpha) + 2(m-n-1)$, for some number $d(\alpha)$ defined in \cref{def:dalpha-range}.
\end{maintheorem}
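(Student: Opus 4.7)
The plan is to adapt the Vassiliev--Segal strategy, comparing $\Hol_\alpha(X,\bP^m)$ with $\Map_\alpha(X,\bP^m)$ via parallel simplicial resolutions of an appropriate ``discriminant.'' A holomorphic degree-$\alpha$ map $f$ is determined by the line bundle $L = f^*\cO(1)\in\Pic^\alpha(X)$ together with $(m+1)$ sections $s_0,\ldots,s_m\in H^0(X,L)$ without common zero, modulo diagonal $\bC^*$-scaling. Since $\alpha-c_1(K_X)$ is ample, Kodaira vanishing forces $\dim H^0(X,L)$ to be constant on $\Pic^\alpha(X)$, so pushforward of the Poincar\'e bundle gives a vector bundle $\cE\to\Pic^\alpha(X)$ with fibre $H^0(X,L)$, and
\[
    \Hol_\alpha(X,\bP^m) \cong \bigl(\cE^{\oplus(m+1)} \setminus \Sigma\bigr)\big/\bC^*,
\]
where $\Sigma\subset\cE^{\oplus(m+1)}$ is the closed locus of tuples sharing at least one common zero on $X$.

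The first main step is to set up a non-degenerate semi-simplicial (Vassiliev-type) resolution $|\Sigma_\bullet|\to\Sigma$ whose $p$-simplices are the data $(s_\bullet,\{x_0,\ldots,x_p\})$ with pairwise distinct $x_i\in X$ and $s_j(x_i)=0$ for all $i,j$. After a suitable compactification, the augmentation is a proper homology equivalence, and the skeletal filtration produces a spectral sequence whose $E^1$-page assembles from twisted Borel--Moore homology of the unordered configuration spaces $F(X,p+1)/\fS_{p+1}$ with coefficients in the subbundle of $\cE^{\oplus(m+1)}$ consisting of tuples vanishing on the prescribed $(p+1)$-point subscheme. An Alexander-duality argument then converts this computation of $H_c^*(\Sigma)$ into a computation of $H_*(\Hol_\alpha(X,\bP^m))$ in a range.

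The role of the quantity $d(\alpha)$ is to guarantee that for $p\leq d(\alpha)$, evaluation of sections of $L$ at any $(p+1)$-tuple of distinct points of $X$ is surjective, i.e., $L$ is $p$-very-ample; under the ampleness hypothesis on $\alpha-c_1(K_X)$, this is the content expected from \cref{def:dalpha-range}. Tracking codimensions then shows that the $p$-th filtration layer contributes to $H_*(\Hol_\alpha(X,\bP^m))$ only in degrees exceeding roughly $(2m-2n+1)(p+1)$; pushing the analysis up to $p=d(\alpha)$ and accounting for the dimension of the configuration space and the Alexander-duality degree shift produces the claimed range $(2m-2n+1)\cdot d(\alpha)+2(m-n-1)$.

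Finally, to conclude a comparison with $\Map_\alpha$, I would construct the analogous spectral sequence on the topological side by a scanning / electric-field model, realising $\Map_\alpha(X,\bP^m)$ as the space of sections of a fibre bundle with fibre a topological analogue of $\bP^m$ and excising a continuous analogue of $\Sigma$. The inclusion $\Hol_\alpha\hookrightarrow\Map_\alpha$ then induces a morphism of spectral sequences, and the theorem reduces to checking that this morphism is an isomorphism on $E^1$-pages for simplicial degree $p\leq d(\alpha)$. This comparison of $E^1$-pages is where I expect the technical heart of the proof to lie: it requires identifying the relevant local coefficient systems over configuration spaces on both the holomorphic and continuous sides, and verifying that they agree up to the threshold $d(\alpha)$ at which sections of $L$ cease to detect finer-than-expected incidence conditions.
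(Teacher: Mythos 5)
Your outline correctly reconstructs the holomorphic side: under the hypothesis that $\alpha - c_1(K_X)$ is ample, Kodaira vanishing plus cohomology and base change does make $p_*\cP^{\oplus m+1}$ a vector bundle over $\Pic^\alpha(X)$, and your model $\bigl(\cE^{\oplus(m+1)}\setminus\Sigma\bigr)/\bC^*$ is exactly the paper's $\Alg_\alpha(X,\bP^m)$. A discriminant resolution with cut-off at $d(\alpha)$, plus duality in the (finite-dimensional) total space, is also essentially how the fibrewise input (\cref{thm:hprinciple}, quoted from earlier work) is proved. But there is a genuine gap at the last step, and it is precisely the point of this paper. Your continuous-side model --- nowhere-vanishing continuous sections of $\cL_\alpha^{\oplus(m+1)}$ modulo scaling --- is \emph{not} $\Map_\alpha(X,\bP^m)$ when $H^1(X;\bZ)\neq 0$: the space of continuous maps is the quotient of $\Gammacon(\cL_\alpha^{\oplus m+1}-0)$ by the full gauge group $\Map(X,\bC^\times)$, not by the constants $\bC^\times$, so $\bP\Gammacon(\cL_\alpha^{\oplus m+1}-0)\to\Map_\alpha(X,\bP^m)$ is a principal bundle with structure group $\Map(X,\bC^\times)/\bC^\times\simeq H^1(X;\bZ)$, i.e.\ a nontrivial covering up to homotopy. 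Consequently, comparing your two spectral sequences ``$E^1$-page by $E^1$-page'' would compare the homology of $\Hol_\alpha(X,\bP^m)$ (which fibres over the torus $\Pic^\alpha(X)\simeq K(H^1(X;\bZ),1)$) with the homology of the wrong space on the continuous side, off exactly by this covering. No identification of local coefficient systems over configuration spaces can repair this; what is needed is a structural statement showing that the $\Pic^\alpha(X)$-direction on the holomorphic side cancels against the $\Map(X,\bC^\times)/\bC^\times$-freedom on the continuous side. (A secondary issue: Alexander duality is not available in the infinite-dimensional space of continuous sections, so the ``parallel'' resolution on the topological side cannot be set up as stated.)

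The paper resolves this by interposing a third space: the fibrewise continuous section space $\bP\Gammaconfib(\cE-0 \rel \Pic^\alpha(X))$, which is a genuine fibre bundle over $\Pic^\alpha(X)$ with fibre $\bP\Gammacon(\cL^{\oplus m+1}-0)$ (\cref{lemma:fibration}). The comparison then splits into two steps, neither of which appears in your proposal. First, \cref{thm:homotopy-equivalence}: the projectivisation map $\Gammaconfib(\cE-0\rel\Pic^\alpha(X))\to\Map_\alpha(X,\bP^m)$ is a weak equivalence, proved by matching two fibration sequences whose fibres are both identified with $H^1(X;\bZ)$ --- this is the cancellation you are missing. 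Second, \cref{thm:homology-isomorphism}: the inclusion $\Alg_\alpha(X,\bP^m)\hookrightarrow\bP\Gammaconfib(\cE-0\rel\Pic^\alpha(X))$ is a homology isomorphism in the stated range; here one cannot simply compare Serre spectral sequences, because the holomorphic side is only an \emph{open subset} of a projective bundle over $\Pic^\alpha(X)$, not a fibration, and the paper instead invokes a homological microfibration lemma (\cref{lemma:micro-comparison}, in the spirit of Weiss and Raptis) together with the fibrewise h-principle. Your proposal, where it is correct, amounts to re-deriving that fibrewise input; the two missing steps above are the actual content of the theorem in the non-simply-connected case. (Two minor further points: evaluation surjectivity at distinct points is the paper's $k$-interpolating condition, which is weaker than $k$-very ampleness; and the stated range's term $2(m-n-1)$, rather than $2(m-n)$, is an artefact of the microfibration argument, which costs two degrees relative to the fibrewise statement.)
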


When $X$ is simply connected and $m > n$, both mapping spaces are simply connected and the conclusion of the theorem can be improved to a homotopy equivalence in the same range by the homology Whitehead theorem. We shall explain later how to estimate the number $d(\alpha)$, but let us for now only give some examples:
\begin{enumerate}
    \item If $X$ is a Riemann surface of genus $g$ and $\alpha \in \bZ \cong H^2(X;\bZ)$ denotes the classical degree, then $d(\alpha) \geq \alpha - 2g$ if $\alpha \geq 2g$.
    \item If $X = \bP^n$ and $\alpha \in \bZ \cong H^2(\bP^n;\bZ)$ is the classical degree, then $d(\alpha) \geq \alpha$ if $\alpha \geq 0$.
    \item If $X = \bP^{n_1} \times \cdots \times \bP^{n_r}$ is a product of projective spaces, a map has a degree $\alpha = (\alpha_1,\dotsc,\alpha_r) \in \bZ^r$ and $d(\alpha) \geq \min (\alpha_1,\dotsc,\alpha_r)$.
    \item For any $X$ and for any $M > 0$, there exists $\alpha \in H^2(X;\bZ)$ such that $d(\alpha) \geq M$. In particular the range of degrees in our theorem can be arbitrarily big.
\end{enumerate}

\subsection{Relations to previous work}

In his seminal paper \cite{segal_topology_1979}, Segal proved \Cref{thm:main-intro} when $X$ is a Riemann surface. In the forty years that followed, much progress was made to extend his result to more general targets, including, to cite a few, Grassmannians \cite{kirwan_spaces_1985}, flag manifolds \cites{guest_topology_1984,mann_moduli_1993,boyer_topology_1994,hurtubise_holomorphic_1996,boyer_stability_2001}, loop groups \cite{gravesen_topology_1989}, and toric varieties \cite{guest_topology_1995}. However little headway was made in cases where the domain $X$ has dimension greater than one. A notable exception is Mostovoy's proof of \cref{thm:main-intro} when $X = \bP^n$ \cite{mostovoy_spaces_2006}. The problem of generalising Segal's theorem to higher dimensional varieties has thus attracted some attention, see e.g. \cite[Question~29]{gomez-gonzales_problems_2020}. We propose an approach in this paper.

In a somewhat opposite direction, one could wonder about analogues of \cref{thm:main-intro} when $X$ is not projective. Then the results in the literature have a very different flavour. Indeed, when $X$ is Stein, the Oka principle \cite{gromov_okas_1989} states that the inclusion $\Hol(X,\bP^m) \hookrightarrow \Map(X,\bP^m)$ is a weak homotopy equivalence. However such a strong conclusion cannot possibly hold when $X$ is projective, as seen for example by noting that the homolorphic mapping space is finite dimensional whereas the continuous one might have infinite cohomological dimension.

\subsection{Open questions}

When $m = n$, the mapping spaces are not simply-connected anymore, and the homology Whitehead theorem cannot be applied directly. The space $\Map_\alpha(X,\bP^m)$ is always nilpotent, and Segal shows in \cite[Corollary~6.3]{segal_topology_1979} that $\Hol_\alpha(\bP^1,\bP^1)$ is ``nilpotent in a range'' which implies in that case that the inclusion induces an isomorphism on homotopy groups in a range. It would be interesting to know if this occurs in other cases.

In \cite{kirwan_spaces_1985}, Kirwan uses Segal's theorem to study holomorphic maps from Riemann surfaces to Grassmannians. This has notably applications in holomorphic K-theory, see e.g. \cite[Section~7]{friedlander_semi-topological_2002}. It is natural to ask about generalisations of Kirwan's results to higher dimensional varieties, although a naive transcription of her arguments faces the formidable problem of understanding vector bundles on higher dimensional varieties.

\subsection{Proof strategy}

A map $X \to \bP^m$ is expressed in homogeneous coordinates by a formula of the form
\[
    X \ni x \longmapsto [s_0(x):\dotsc:s_m(x)] \in \bP^m,
\]
where $s_i$ are global sections of a line bundle $\cL$ on $X$, either holomorphic or continuous, that never vanish simultaneously. Thus, when $c_1(\cL) = \alpha$, and assuming that $X$ is simply connected for simplicity, the inclusion
\[
    \Hol_\alpha(X,\bP^m) \cong \Gammahol(\cL^{\oplus m+1} - 0) / \bC^\times \hookrightarrow \Map_\alpha(X,\bP^m) \cong \Gammacon(\cL^{\oplus m+1} - 0) / \bC^\times
\]
is a homology equivalence in a range whenever it is the case for the inclusion
\[
    \Gammahol(\cL^{\oplus m+1} - 0) \hookrightarrow \Gammacon(\cL^{\oplus m+1} - 0)
\]
of the holomorphic section space inside the continuous one. This was shown in our earlier work \cite{aumonier_h-principle_2021} and directly implies \cref{thm:main-intro} when $X$ is simply connected\footnote{In that case we even have the slightly better range $* < (2m-2n+1)\cdot d(\alpha) + 2(m-n)$, see \cref{thm:hprinciple}. The difference of $-2$ is an artefact of our proof of the general case.}. In general, when $H^1(X;\bZ) \neq 0$ we only have
\[
    \Map_\alpha(X,\bP^m) \cong \Gammacon(\cL^{\oplus m+1} - 0) / \Map(X,\bC^\times)
\]
and
\[
    \Hol_\alpha(X,\bP^m) \cong \bigcup_{[\cL] \in \Pic^\alpha(X)} \Gammahol(\cL^{\oplus m+1} - 0) / \bC^\times
\]
where $\Pic^\alpha(X)$ parameterises isomorphism classes of holomorphic line bundles with first Chern class $\alpha$. The point of this paper is to leverage the homotopy equivalences $\Map(X,\bC^\times) / \bC^\times \simeq H^1(X;\bZ)$ and $\Pic^\alpha(X) \simeq K(H^1(X;\bZ),1)$, and show that they ``cancel each other out'' to produce \cref{thm:main-intro}. This strategy was suggested to us by Phil Tosteson.

\subsection{Outline}

In \cref{sec:point-set} we conscientiously give a point-set model for the space of holomorphic maps, which makes precise the fact that they are given in homogeneous coordinates by sections of line bundles. In \cref{sec:interpolating} we define the number $d(\alpha)$ appearing in the range of \cref{thm:main-intro} and explain how to estimate it in some cases. Finally we prove our main theorem in \cref{sec:proof}.

\setcounter{tocdepth}{1}
\tableofcontents

\subsection*{Acknowledgements} 
I would like to thank Phil Tosteson for making me aware of subtleties concerning holomorphic line bundles and suggesting the strategy used in this paper. This paper grew out of a remark from him on an earlier version of \cite{aumonier_h-principle_2021}. The microfibration trick is derived from joint work with Ronno Das, whom I thank. The arguments in this paper were obtained when I was in Copenhagen, and I thank S\o ren Galatius for very helpful discussions over the years. I was supported by the Danish National Research Foundation through the Copenhagen Centre for Geometry and Topology (DNRF151), as well the ERC under the European Union’s Horizon 2020 research and innovation programme (grant agreement No. 756444) during the writing of this article.

\section{Mapping spaces and point-set topology}\label{sec:point-set}

In this section, we recall well-known facts about mapping spaces to projective space. In particular, we carefully give a point-set model for the space of holomorphic maps in \cref{prop:scheme-isomorphism}. Although we claim no originality for this section, we have chosen to include it to set the remainder of the paper on firm topological ground.

In this part only, we shall use the language of algebraic geometry. By convention we simply say ``scheme'' to mean a complex scheme of finite type, and denote their category by $\catSch$.

We denote by $\Pic^\alpha(X)$ the the connected component of the Picard scheme of $X$ whose complex points are isomorphism classes of line bundles on $X$ with first Chern class $\alpha$. We choose once and for all a Poincar\'e line bundle on $X \times \Pic^\alpha(X)$, see e.g. \cite[Exercise~4.3]{kleiman_picard_2005}, which we denote by $\cP$. Whenever we talk about a line bundle $\cL$ representing an isomorphism class $[\cL] \in \Pic^\alpha(X)$, we shall always mean our chosen representative line bundle $\cL = \iota^*\cP$ on $X \cong X \times \{[\cL]\} \overset{\iota}{\hookrightarrow} X \times \Pic^\alpha(X)$. This applies in particular to make sense of sections of $[\cL]$, i.e. we mean sections of $\cL = \iota^*\cP$. As a shorthand, we write $\cE \defeq \cP^{\oplus m+1}$, and let $p \colon X \times \Pic^\alpha(X) \to \Pic^\alpha(X)$ be the second projection. We shall further assume that
\begin{enumerate}[label=(A\arabic*)]
    \item\label{enum:cohomology-assumption} the cohomology group $H^1(X,\cL)$ vanishes for any $[\cL] \in \Pic^\alpha(X)$;
    \item\label{enum:counit-assumption} the counit (i.e. the evaluation morphism) $\epsilon \colon p^*p_* \cP \to \cP$ is surjective.
\end{enumerate}

\bigskip

By definition, an algebraic map from $X$ to $\bP^m$ is a complex point of the Hom scheme $\Hom(X,\bP^m)$, which exists as a complex quasi-projective scheme by a general representability theorem of Grothendieck \cite[Section~4.c]{grothendieck_techniques_1961}. By Chow's theorem, holomorphic maps from $X$ to $\bP^m$ are algebraic, whence the equality of sets
\[
    \Hol(X,\bP^m) = \Hom(X,\bP^m)(\bC).
\]
The left-hand space is more naturally topologised as a subspace of the continuous mapping space $\Map(X,\bP^m)$ with the compact-open topology, whereas the right-hand space is equipped with the analytic topology. Fortunately these topologies agree, see e.g. \cite[Section~10.4 Th\'eor\`eme~2]{douady_probleme_1966}.

Below, we give a convenient point-set model for the space of algebraic maps. From the universal property of projective space \stacks{01ND}, we learn that the Hom scheme $\Hom(X,\bP^m)$ represents the functor
\begin{align}\label{eqn:hom-scheme-functor}
\begin{split}
    (\catSch)^\opp &\lra \catSet \\
    T &\longmapsto \left\{ (\cL,s_0,\dotsc,s_m)\ \middle\vert \begin{array}{l}
    \cL \text{ is a line bundle on } X \times T \text{ and } s_0,\dotsc,s_m\\
    \text{ are global sections of $\cL$ which generate $\cL$}
  \end{array}\right\} \bigg/ \sim
\end{split}
\end{align}
where $(\cL,s_0,\dotsc,s_m) \sim (\cN,t_0,\dotsc,t_n)$ if there exists an isomorphism $\varphi \colon \cL \to \cN$ such that $\varphi(s_i) = t_i$.
This gives a blueprint to build the scheme of algebraic maps: it should be a scheme which amalgamates $(m+1)$-tuples of generating sections, up to scaling, for all isomorphism classes of line bundles. This is precisely what we do below by constructing it as a certain projective bundle over $\Pic^\alpha(X)$.
Under the assumptions~\ref{enum:cohomology-assumption} and \ref{enum:counit-assumption}, by cohomology and base change, the sheaf $p_*\cE$ is locally free and the kernel
\[
    \cK \defeq \ker\left( p^*p_* \cE \to \cE \right)
\]
is also locally free. The injection $\cK \hookrightarrow p^*p_*\cE$ defines a closed subscheme 
\[
    \cZ \defeq \Proj_{X \times \Pic^\alpha(X)}\big(\Sym \cK^\vee\big) \hookrightarrow \Proj_{X \times \Pic^\alpha(X)}\big(\Sym p^*(p_*\cE)^\vee\big) \cong X \times \Proj_{\Pic^\alpha(X)}\big( \Sym \ (p_*\cE)^\vee \big).
\]
The second projection $\mathrm{pr}$ from the right-hand product is closed as $X$ is proper, hence $\mathrm{pr}(\cZ)$ is closed, and we define 
\[
    \Alg_\alpha(X,\bP^m) \defeq \Proj_{\Pic^\alpha(X)}\big( \Sym \ (p_*\cE)^\vee \big) - \mathrm{pr}(\cZ),
\]
as an open subscheme of the projective bundle $\bP(p_*\cE) \defeq \Proj_{\Pic^\alpha(X)}\big( \Sym\ (p_*\cE)^\vee \big)$. Its complex points form the set
\begin{equation}\label{eqn:complex-points-model}
    \Alg_\alpha(X,\bP^m)(\bC) = \set{([\cL],[s]) \in \Pic^\alpha(X) \times \bP\Gamma(X, \cL^{\oplus m+1})}{\forall x \in X,\ s(x) \neq 0}
\end{equation}
where we recall that $\cL$ denotes the line bundle on $X$ representing the isomorphism class $[\cL]$ afforded by the choice of $\cP$. This is our convenient point-set model:
\begin{proposition}\label{prop:scheme-isomorphism}
The complex schemes $\Hom_\alpha(X,\bP^m)$ and $\Alg_\alpha(X,\bP^m)$ are isomorphic.
\end{proposition}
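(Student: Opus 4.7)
My strategy is to apply the Yoneda lemma: I shall exhibit a natural bijection between the functors of points of $\Hom_\alpha(X,\bP^m)$ and $\Alg_\alpha(X,\bP^m)$. By the universal property of projective space, $\Hom_\alpha(X,\bP^m)$ represents the subfunctor of \eqref{eqn:hom-scheme-functor} obtained by restricting to tuples whose line bundle has fibrewise first Chern class $\alpha$.

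First I unpack the functor of $T$-points of $\Alg_\alpha(X,\bP^m)$. A $T$-point of the projective bundle $\bP(p_*\cE) = \Proj_{\Pic^\alpha(X)}(\Sym (p_*\cE)^\vee)$ consists of a morphism $g \colon T \to \Pic^\alpha(X)$ together with a line sub-bundle $\cL \hookrightarrow g^*p_*\cE$, taken up to isomorphism. Assumption~\ref{enum:cohomology-assumption} combined with cohomology-and-base-change ensures that $p_*\cE$ is locally free and that $g^*p_*\cE \cong (p_T)_*\cE_T$, where $\cE_T \defeq (1 \times g)^*\cE$. By adjunction, $\cL \hookrightarrow (p_T)_*\cE_T$ corresponds to a map $p_T^*\cL \to \cE_T$ obtained by composing with the pulled-back counit of assumption~\ref{enum:counit-assumption}. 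The open condition cutting out $\Alg_\alpha(X,\bP^m)$ inside $\bP(p_*\cE)$ then translates into requiring that $p_T^*\cL \to \cE_T$ is nowhere zero: by construction, $\mathrm{pr}(\cZ)$ is precisely the locus of $(g,\cL)$ admitting some $x \in X$ at which the composition $\cL \to g^*p_*\cE \to (x,g)^*\cE$ vanishes.

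Next I construct the bijection. Given $(g, \cL) \in \Alg_\alpha(X,\bP^m)(T)$, I set $\cN \defeq \cL^\vee$ and $\cM \defeq (1 \times g)^*\cP \otimes p_T^*\cN$. Tensoring $p_T^*\cL \to \cE_T$ with $p_T^*\cN$ yields a map $\cO_{X \times T} \to \cE_T \otimes p_T^*\cN \cong \cM^{\oplus m+1}$, i.e.\ $m+1$ sections of $\cM$; these are generating precisely when the original map is nowhere zero. Conversely, starting from $(\cM, s_0, \dotsc, s_m)$, the Picard scheme's universal property and the choice of Poincar\'e bundle $\cP$ furnish a unique morphism $g \colon T \to \Pic^\alpha(X)$ classifying $\cM$, together with a line bundle $\cN$ on $T$ (unique up to isomorphism) satisfying $\cM \cong (1 \times g)^*\cP \otimes p_T^*\cN$. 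The sections then transform by the inverse adjunction into a line sub-bundle inclusion $\cN^\vee \hookrightarrow g^*p_*\cE$, producing the pair $(g, \cN^\vee)$.

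I expect the main obstacle to be careful bookkeeping rather than any conceptual difficulty. I shall need to verify that the equivalence relation $\sim$ in \eqref{eqn:hom-scheme-functor} corresponds exactly to the indeterminacy in $\cN$, so that both directions of the bijection descend to equivalence classes; that everything is natural in $T$, which reduces to naturality of the projection formula, cohomology-and-base-change and the projective-bundle formula; and that the conventions $\bP(-) = \Proj(\Sym(-)^\vee)$ and the Poincar\'e bundle normalisation are tracked consistently throughout. Once these verifications are in place, Yoneda delivers the desired isomorphism of schemes.
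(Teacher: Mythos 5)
Your proposal is correct and follows essentially the same route as the paper: a Yoneda/functor-of-points argument in which the $T$-points of $\Alg_\alpha(X,\bP^m)$ are unpacked via the universal property of relative Proj together with the pointwise nonvanishing condition cutting out the complement of $\mathrm{pr}(\cZ)$ (this is exactly the content of \cref{lemma:functor-of-points}), and the bijection with the Hom-functor \eqref{eqn:hom-scheme-functor} is produced by tensoring with the Poincar\'e bundle, adjunction/pushforward under cohomology and base change, and the universal property of $\Pic^\alpha(X)$. Your use of line sub-bundles $\cN^\vee \hookrightarrow g^*p_*\cE$ is just the dual of the paper's formulation via quotients $f^*(p_*\cE)^\vee \twoheadrightarrow \cA$, so the two arguments are the same up to this convention.
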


We shall shortly prove the proposition, but first record a direct consequence of the definitions:
\begin{lemma}\label{lemma:functor-of-points}
Let $T \in \catSch$ be a scheme, and let $q \colon X \times T \to T$ be the projection. Consider triples $(f,\cA,\varphi)$ with
\begin{enumerate}[label=(\roman*)]
    \item a morphism $f \colon T \to \Pic^\alpha(X)$,
    \item a line bundle $\cA$ on $T$,
    \item a surjection $\varphi \colon f^* (p_*\cE)^\vee \twoheadrightarrow \cA$,
\end{enumerate}
such that the composition
\begin{equation}\label{eqn:surjectivity-condition}
    (1 \times f)^*\cE^\vee \overset{\epsilon^\vee}{\lra} (1 \times f)^*p^*( p_*\cE)^\vee = q^*f^* (p_*\cE)^\vee \overset{q^*\varphi}{\lra} q^*\cA
\end{equation}
is a surjective. Two triples $(f,\cA,\varphi)$ and $(f,\cB,\psi)$ are said equivalent if $f = g$, and there exists an isomorphism $\iota \colon \cA \to \cB$ such that $\psi = \iota \circ \varphi$. The scheme $\Alg_\alpha(X,\bP^m)$ represents the functor
\[
    T \longmapsto \left\{ \text{equivalence classes of triples } (f,\cA,\varphi) \text{ as above} \right\}.
\]
\end{lemma}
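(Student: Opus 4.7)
The plan is to unwind the definition $\Alg_\alpha(X,\bP^m) = \bP(p_*\cE) \setminus \mathrm{pr}(\cZ)$ by combining the standard universal property of the ambient projective bundle with a scheme-theoretic interpretation of the closed subscheme $\cZ$. First I would recall that for a locally free sheaf $\cF$ on a base scheme $S$, the projective bundle $\bP(\cF) = \Proj_S(\Sym \cF^\vee)$ represents the functor sending $(f \colon T \to S)$ to equivalence classes of surjections $f^*\cF^\vee \twoheadrightarrow \cA$ onto a line bundle $\cA$ on $T$, where two such surjections are identified when they differ by an isomorphism of the target. Since $p_*\cE$ is locally free by assumption~\ref{enum:cohomology-assumption} and cohomology and base change, applying this with $S = \Pic^\alpha(X)$ and $\cF = p_*\cE$ identifies $T$-points of $\bP(p_*\cE)$ with equivalence classes of triples $(f, \cA, \varphi)$ exactly as in the lemma, but without yet any surjectivity constraint on the composition~\eqref{eqn:surjectivity-condition}.

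Next I would interpret the closed subscheme $\cZ$ scheme-theoretically. The counit $\epsilon$ is surjective by~\ref{enum:counit-assumption}, and both $p^*p_*\cE$ and $\cE = \cP^{\oplus m+1}$ are locally free, hence $\cK$ is locally free and dualising yields a short exact sequence
\[
    0 \to \cE^\vee \to p^*(p_*\cE)^\vee \to \cK^\vee \to 0.
\]
Unravelling the closed embedding $\cZ = \Proj(\Sym \cK^\vee) \hookrightarrow \Proj(\Sym p^*(p_*\cE)^\vee) \cong X \times \bP(p_*\cE)$, a morphism into the ambient Proj factors through $\cZ$ precisely when the pullback of the universal quotient kills the subsheaf $\cE^\vee$. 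Given a $T$-point $(f, \cA, \varphi)$ corresponding to $h \colon T \to \bP(p_*\cE)$, pulling the universal quotient back along $1 \times h \colon X \times T \to X \times \bP(p_*\cE)$ recovers $q^*\varphi$, and its restriction to $\cE^\vee$ is exactly the composition~\eqref{eqn:surjectivity-condition}. Consequently, the scheme-theoretic preimage $(1 \times h)^{-1}(\cZ) \subset X \times T$ is the zero locus of this composition.

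To finish, I would observe that $h$ factors through $\Alg_\alpha(X,\bP^m)$ iff $h(T) \cap \mathrm{pr}(\cZ) = \emptyset$, iff no fibre of $X \times T \to T$ meets $(1 \times h)^{-1}(\cZ)$, iff this preimage is empty as a set. Since it is cut out by a morphism whose target is a line bundle, emptiness is equivalent to the composition~\eqref{eqn:surjectivity-condition} being surjective, which is precisely the required condition; functoriality of the resulting correspondence in both directions is then automatic from the universal property of $\bP(p_*\cE)$. The main subtleties I expect are the bookkeeping with the Grothendieck convention for projective bundles (which swaps subsheaves and quotients through dualisation), and the passage from ``$h(T)$ avoids $\mathrm{pr}(\cZ)$'' to ``$(1 \times h)^{-1}(\cZ)$ is empty''---the latter implicitly uses that $\mathrm{pr}$ is a closed map, which is exactly where properness of $X$ enters.
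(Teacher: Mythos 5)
Your proposal is correct and takes essentially the same route as the paper: both identify $T$-points of the ambient bundle $\bP(p_*\cE)$ via the universal property of relative Proj, and then show that avoiding $\cZ$ is equivalent to surjectivity of the composite \eqref{eqn:surjectivity-condition}, the key point being that a map onto the fibre of a line bundle is nonzero iff surjective (plus Nakayama), which you package as ``the preimage $(1\times h)^{-1}(\cZ)$ is the zero locus of the composite'' while the paper phrases it as pointwise non-liftability against the exact sequence $0 \to \cE^\vee \to p^*(p_*\cE)^\vee \to \cK^\vee \to 0$. One minor correction to your closing remark: the set-theoretic equivalence between ``$h(T)$ avoids $\mathrm{pr}(\cZ)$'' and ``$(1\times h)^{-1}(\cZ)=\emptyset$'' does not actually use that $\mathrm{pr}$ is closed; properness of $X$ is needed only so that $\mathrm{pr}(\cZ)$ is closed and $\Alg_\alpha(X,\bP^m)$ is a well-defined open subscheme in the first place.
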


\begin{proof}
From the universal property of the relative Proj construction, see e.g. \stacks{01NS}, we see that a morphism $g \colon T \to \bP(p_*\cE)$ is exactly given by the data of (i), (ii) and (iii), up to equivalence. The morphism $g$ factors through $\Alg_\alpha(X,\bP^m)$ if and only if the image of $1 \times g \colon X \times T \to X \times \bP(p_*\cE)$ is disjoint from the set $\cZ$. This condition can be checked at each point separately, and thus is equivalent to: for any point $y \colon \Spec(\kappa(y)) \to X \times T$, the morphism $(1 \times f) \circ y$ to $X \times \bP(p_*\cE)$ cannot be lifted to $\cZ$. In other words, transcribing into the functor of points perspective, there is no dotted lift making the following diagram commute:
\[
\begin{tikzcd}[row sep=15pt, column sep=small]
0 \arrow[r] & y^* (1\times f)^* \cE^\vee \arrow[r] & y^*(1 \times f)^* p^*(p_*\cE)^\vee = y^*q^*f^*(p_*\cE)^\vee \arrow[r] \arrow[d, two heads] & y^*(1\times f)^*\cK^\vee \arrow[r] \arrow[ld, dashed] & 0 \\
    & & y^*q^*\cA      
\end{tikzcd}
\]
Using the exactness of the first row, this is equivalent to the composite $y^* (1\times f)^* \cE^\vee \to y^*q^*\cA$ not vanishing. But this is a morphism of $\kappa(y)$-vector spaces, with codomain of dimension 1. Hence it is non-zero if and only if it is surjective. By Nakayama's lemma, this is equivalent to a surjection at the level of stalks (as opposed to fibres). This is precisely the condition~\eqref{eqn:surjectivity-condition} stated in the lemma.
\end{proof}

\begin{proof}[Proof of \cref{prop:scheme-isomorphism}]
We shall show that the functors they represent are naturally isomorphic. Let $T \in \catSch$ be a test scheme, write $q \colon X \times T \to T$ for the projection, and let $(f,\cA,\varphi) \in \Alg_\alpha(X,\bP^m)(T)$ be as in \cref{lemma:functor-of-points}. We shall naturally produce an associated $T$-point of $\Hom_\alpha(X,\bP^m)$ as follows. By dualising \cref{eqn:surjectivity-condition} and tensoring with $(1\times f)^*\cP^\vee$ we obtain an injection
\[
    \cL^\vee \defeq q^*\cA^\vee \otimes (1\times f)^*\cP^\vee \hookrightarrow \cO_{X \times T}^{\oplus m+1},
\]
whose dual gives a point $[\cO_{X \times T}^{\oplus m+1} \twoheadrightarrow \cL] \in \Hom_\alpha(X,\bP^m)(T)$ using \cref{eqn:hom-scheme-functor}. 
Conversely, let $\cO_{X \times T}^{\oplus m+1} \twoheadrightarrow \cL$ be given. By \cite[Exercise~4.3]{kleiman_picard_2005}, there exists a unique morphism $f \colon T \to \Pic^\alpha(X)$ such that
\[
    (1\times f)^*\cP \otimes q^*\cA = \cL
\]
for some line bundle $\cA$ on $T$. Plugging that formula into the given surjection, then tensoring by $(1\times f)^*\cP^\vee$ and finally dualising yields an injection
\[
    q^*\cA^\vee \hookrightarrow (1\times f)^*\cE.
\]
We can then apply the pushforward functor $q_*$ to obtain
\[
    q_*q^*\cA^\vee \cong \cA^\vee \hookrightarrow q_*(1\times f)^*\cE \cong f^* p_*\cE,
\]
where the isomorphisms follow from cohomology and base change, the second one using our assumption~\ref{enum:counit-assumption}. Dualising yields surjection $f^* (p_*\cE)^\vee \twoheadrightarrow \cA$ and thus a $T$-point of $\Alg_\alpha(X,\bP^m)$. One checks directly that global generation of $\cL$ corresponds under this construction to the condition of \cref{eqn:surjectivity-condition}.

Finally, it is straightforward to verify that these constructions are well-defined, are functorial, and are inverse to each other.
\end{proof}

\section{Interpolating line bundles}\label{sec:interpolating}

In this section, we define the number $d(\alpha)$ appearing in the range of \cref{thm:main-intro}.

\begin{definition}\label{def:interpolating}
Let $\cL$ be a line bundle on $X$ and $k \geq 0$ be an integer. We say that $\cL$ is \emph{$k$-interpolating} if for any choice of $k+1$ distinct points $x_0,\dotsc,x_k \in X$ the evaluation map
\[
    \Gammahol(\cL) \lra \bigoplus_{i=0}^k \cL|_{x_i}, \quad s \longmapsto (s(x_0),\dotsc,s(x_k))
\]
is surjective.
\end{definition}

\begin{example}\label{example:interpolating}
A line bundle $\cL$ is $0$-interpolating if and only if it is globally generated, or in other words basepoint free. If $\cL$ is very ample, then it is also $1$-interpolating. More generally, if $\cL$ is $k$-very ample then it is also $k$-interpolating. In particular $\cL^{\otimes k}$ is $k$-interpolating if $\cL$ is very ample.
\end{example}

In practice, it is useful to notice that the property of being $k$-interpolating is implied by a stronger, but easier, cohomological criterion. Indeed, if for any closed subscheme $Z \subset X$ of length $k+1$ with ideal sheaf $\cI_Z$ the coherent cohomology group $H^1(X, \cL \otimes \cI_Z)$ vanishes, then the long exact sequence associated to $0 \to \cI_Z \to \cO_X \to \cO_Z \to 0$ shows that $\cL$ is $k$-interpolating.

\bigskip

Holomorphic maps $X \to \bP^m$ can only have degree in the N\'eron--Severi group $\NS(X) \subset H^2(X;\bZ)$ generated by Chern classes of holomorphic line bundles. We thus make the following:

\begin{definition}\label{def:dalpha-range}
Let $\alpha \in \NS(X)$. We define $d(\alpha) \in \bN \cup \{-\infty\}$ as
\[
    d(\alpha) \defeq \sup \ \set{k \in \bN}{\forall [\cL] \in \Pic^\alpha(X), \ \cL \text{ is $k$-interpolating}}.
\]
When the set of the right-hand side is empty $d(\alpha) = - \infty$, but we will never consider this situation in this article. By convention we also set $d(\alpha) = - \infty$ if $\alpha \notin \NS(X)$.
\end{definition}

We are not aware of a general formula for $d(\alpha)$ valid for any smooth complex projective variety $X$. In fact, it seems quite unlikely that such a formula should exist, see \cite[Appendix]{aumonier_scanning_2023} for a related discussion. Nonetheless, $d(\alpha)$ can be computed in particular cases of interest. The following two examples show that \cref{thm:main-intro} recovers the theorems of Segal \cite{segal_topology_1979} and Mostovoy \cite{mostovoy_spaces_2006}.

\begin{lemma}
If $X$ is a Riemann surface of genus $g$, then $\Pic^\alpha(X)$ parameterises isomorphism classes of line bundles of degree $\alpha \in \bZ \cong H^2(X;\bZ)$. If $\alpha \geq 2g$ then $d(\alpha) \geq \alpha - 2g$.
\end{lemma}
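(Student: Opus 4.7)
The first claim is classical: for a compact Riemann surface $X$, the first Chern class $c_1 \colon \Pic(X) \to H^2(X;\bZ) \cong \bZ$ sends a holomorphic line bundle to its degree, and $\Pic^\alpha(X)$ is by definition the preimage of $\alpha$. So I would just invoke this standard fact with a suitable reference.

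For the substantive statement $d(\alpha) \geq \alpha - 2g$, my plan is to apply the cohomological sufficient condition recorded right after \cref{example:interpolating}: it is enough to show that for every line bundle $\cL$ of degree $\alpha$ and every length-$(k+1)$ closed subscheme $Z \subset X$ with ideal sheaf $\cI_Z$, we have $H^1(X,\cL \otimes \cI_Z) = 0$, where $k = \alpha - 2g$.

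The key observation is that on a smooth curve, the sheaf $\cL \otimes \cI_Z$ is itself a line bundle of degree $\alpha - (k+1) = \alpha - (\alpha - 2g + 1) = 2g - 1$. By Serre duality on $X$,
\[
    H^1(X, \cL \otimes \cI_Z) \cong H^0\bigl(X, K_X \otimes (\cL \otimes \cI_Z)^\vee\bigr)^\vee,
\]
and the line bundle appearing on the right has degree $(2g-2) - (2g-1) = -1 < 0$. A line bundle of negative degree on a connected smooth projective curve has no nonzero global sections, so the right-hand side vanishes.

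This gives $H^1(X,\cL \otimes \cI_Z) = 0$ for every such $Z$, hence $\cL$ is $(\alpha-2g)$-interpolating. Since $\cL$ was an arbitrary representative of a class in $\Pic^\alpha(X)$, we conclude $d(\alpha) \geq \alpha - 2g$. I expect no real obstacle: once the cohomological criterion is invoked, the Serre-duality degree count is automatic. The only minor care needed is the identification $\cL \otimes \cI_Z \cong \cL(-Z)$ as a line bundle on a smooth curve, which uses that $Z$ is an effective Cartier divisor.
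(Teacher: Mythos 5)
Your proof is correct and takes essentially the same approach as the paper: the paper's proof simply invokes the cohomological criterion stated after \cref{example:interpolating} together with the Riemann--Roch theorem (deferring the degree count to a cited external lemma), and the Serre-duality computation you spell out --- $\cL \otimes \cI_Z \cong \cL(-Z)$ has degree $2g-1$, so $K_X \otimes \cL(-Z)^\vee$ has negative degree and no sections --- is exactly the content of that citation. The only difference is that your argument is self-contained rather than a reference.
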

\begin{proof}
This is a direct consequence of the cohomological criterion describe above and the Riemann--Roch theorem, see e.g. \cite[Lemma~A.1]{aumonier_scanning_2023}.
\end{proof}

\begin{lemma}
The line bundle $\cO(\alpha)$ on $X = \bP^n$ is $\alpha$-interpolating when $\alpha \geq 0$. In that case $d(\alpha) \geq \alpha$.
\end{lemma}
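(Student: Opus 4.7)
The plan is to proceed by the direct interpolation construction, since the cohomological criterion is easy to verify but the hands-on approach is even cleaner on $\bP^n$. First I would note that since $\Pic(\bP^n) \cong \bZ$ is generated by $\cO(1)$, the Picard scheme $\Pic^\alpha(\bP^n)$ consists of the single isomorphism class $[\cO(\alpha)]$. Thus to lower-bound $d(\alpha)$ it suffices to check that the unique representative $\cO(\alpha)$ is $\alpha$-interpolating.

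Given $\alpha+1$ distinct points $x_0,\dotsc,x_\alpha \in \bP^n$, I want to show that the evaluation map $\Gammahol(\cO(\alpha)) \to \bigoplus_{i=0}^\alpha \cO(\alpha)|_{x_i}$ is surjective. Recalling that $\Gammahol(\cO(\alpha))$ is the space of homogeneous polynomials of degree $\alpha$ in $n+1$ variables, it is enough to exhibit, for each index $i$, a degree $\alpha$ homogeneous polynomial $s_i$ which vanishes at $x_j$ for every $j \neq i$ but is nonzero at $x_i$; the corresponding images in $\bigoplus \cO(\alpha)|_{x_j}$ then form a basis (after rescaling).

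The polynomial $s_i$ is built as a product of $\alpha$ linear forms. For each $j \neq i$, choose a linear form $\ell_{ij}$ on $\bC^{n+1}$ vanishing at the line $x_j$ but not at $x_i$; such a form exists because $x_i$ and $x_j$ are distinct points of $\bP^n$, so the hyperplane of linear functionals vanishing at $x_j$ does not contain the functional condition of vanishing at $x_i$. Setting $s_i \defeq \prod_{j \neq i} \ell_{ij}$ gives a degree-$\alpha$ polynomial with the required vanishing pattern.

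There is no real obstacle here; the only minor point to be careful about is the existence of the separating linear forms, which reduces to the elementary fact that two distinct lines through the origin in $\bC^{n+1}$ can be separated by a hyperplane. Once the $s_i$ are constructed, surjectivity of the evaluation map is immediate, which proves that $\cO(\alpha)$ is $\alpha$-interpolating and hence $d(\alpha) \geq \alpha$.
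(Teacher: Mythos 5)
Your proof is correct, but it takes a genuinely different route from the paper's. The paper disposes of this lemma in one line: $\cO(1)$ is very ample, hence $\cO(\alpha) = \cO(1)^{\otimes \alpha}$ is $\alpha$-interpolating by \cref{example:interpolating}, which records the general fact that $\cL^{\otimes k}$ is $k$-interpolating (via $k$-very ampleness) whenever $\cL$ is very ample. You instead give a hands-on Lagrange-type interpolation argument: for each index $i$ you build $s_i = \prod_{j \neq i} \ell_{ij}$ as a product of $\alpha$ separating linear forms, so that $s_i$ vanishes at every $x_j$ with $j \neq i$ but not at $x_i$; since each fibre $\cO(\alpha)|_{x_i}$ is one-dimensional, these $\alpha+1$ sections span the target of the evaluation map. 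Your supporting points are all sound: the separating form $\ell_{ij}$ exists because two distinct lines through the origin of $\bC^{n+1}$ span a plane, so the functionals killing $x_j$ do not all kill $x_i$; the edge case $\alpha = 0$ works via the empty product (the constant section of $\cO$); and your remark that $\Pic^\alpha(\bP^n)$ is a single point is exactly what lets the interpolation statement for the one representative $\cO(\alpha)$ yield $d(\alpha) \geq \alpha$ (this step is implicit in the paper). As for what each approach buys: yours is elementary and self-contained, using nothing beyond linear algebra, whereas the paper's is shorter here and centralizes the work in a general statement about very ample bundles that it reuses elsewhere (e.g.\ for the product $\bP^{n_1} \times \cdots \times \bP^{n_r}$, where the analogue of your explicit product-of-linear-forms construction would be less immediate).
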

\begin{proof}
The line bundle $\cO(1)$ is very ample, so $\cO(\alpha) = \cO(1)^{\otimes \alpha}$ is $\alpha$-interpolating by \cref{example:interpolating}.
\end{proof}

Let us also mention one more example for which our main theorem is new:
\begin{lemma}
Let $X = \bP^{n_1} \times \cdots \times \bP^{n_r}$ be a product of projective spaces of dimension $\sum_i n_i \leq m$, with $n_i \geq 1$. Then the line bundle $\cO_X(\alpha_1,\dotsc,\alpha_r)$ is $\min(\alpha_1,\dotsc,\alpha_r)$-interpolating. In other words, $d(\alpha_1,\dotsc,\alpha_r) \geq \min(\alpha_1,\dotsc,\alpha_r)$.
\end{lemma}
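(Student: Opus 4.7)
The plan is to argue directly: given $k+1$ distinct points $x_0,\dotsc,x_k \in X$ with $k = \min(\alpha_1,\dotsc,\alpha_r)$, I will exhibit, for each $i$, a global section $s_i$ of $\cO_X(\alpha_1,\dotsc,\alpha_r)$ such that $s_i(x_i) \neq 0$ and $s_i(x_j) = 0$ for every $j \neq i$. Since the fibres are one-dimensional, this immediately implies that the evaluation map is surjective, hence that $\cO_X(\alpha_1,\dotsc,\alpha_r)$ is $k$-interpolating.

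To build $s_i$, write $x_j = (x_j^{(1)},\dotsc,x_j^{(r)})$. For each $j \neq i$, since $x_j \neq x_i$, there exists an index $\ell(i,j) \in \{1,\dotsc,r\}$ such that $x_j^{(\ell(i,j))} \neq x_i^{(\ell(i,j))}$ in $\bP^{n_{\ell(i,j)}}$. Because any two distinct points of a projective space are separated by a hyperplane, there is a linear form $h_{ij}$ on $\bP^{n_{\ell(i,j)}}$ vanishing at $x_j^{(\ell(i,j))}$ but not at $x_i^{(\ell(i,j))}$. Pulling back along $\pi_{\ell(i,j)} \colon X \to \bP^{n_{\ell(i,j)}}$, I obtain a global section of $\pi_{\ell(i,j)}^*\cO(1)$ whose zero locus contains $x_j$ but not $x_i$. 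Taking the product over all $j \neq i$ yields a section $t_i$ of $\cO_X(a_1,\dotsc,a_r)$ where $a_s \defeq \#\{j \neq i : \ell(i,j) = s\}$; by construction $\sum_s a_s = k$ and $t_i(x_i) \neq 0$, $t_i(x_j) = 0$ for $j \neq i$.

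Finally I must upgrade $t_i$ to a section of the desired bundle. For every $s$, one has $a_s \leq k \leq \alpha_s$, so $\alpha_s - a_s \geq 0$. The line bundle $\cO(\alpha_s - a_s)$ on $\bP^{n_s}$ is globally generated, hence admits a section not vanishing at $x_i^{(s)}$; pulling back and taking the product over $s$ gives a global section $u_i$ of $\cO_X(\alpha_1 - a_1,\dotsc,\alpha_r - a_r)$ with $u_i(x_i) \neq 0$. Then $s_i \defeq t_i \cdot u_i$ is a section of $\cO_X(\alpha_1,\dotsc,\alpha_r)$ with the required vanishing/non-vanishing behaviour. No serious obstacle is expected: the argument is purely combinatorial once one observes that two distinct points of $X$ are always separated by a hyperplane in one of the factors, and that the residual degrees $\alpha_s - a_s$ stay non-negative thanks to the choice $k = \min_s \alpha_s$.
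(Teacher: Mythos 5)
Your proof is correct, and it takes a genuinely different route from the paper's. The paper disposes of this lemma in two lines by quoting standard positivity facts: $\cO_X(k_1,\dotsc,k_r)$ is globally generated (resp.\ very ample) when all $k_i \geq 0$ (resp.\ $k_i > 0$), and then it invokes \cref{example:interpolating}, i.e.\ the chain ``tensor product of very ample bundles is $k$-very ample, and $k$-very ample implies $k$-interpolating''. Your argument instead verifies \cref{def:interpolating} by hand: you reduce surjectivity of the evaluation map to producing, for each point $x_i$, a section vanishing at all the other points but not at $x_i$ (which suffices because the fibres $\cL|_{x_i}$ are one-dimensional), and you build that section as a product of $k$ pulled-back linear forms separating $x_i$ from each $x_j$, corrected by a nowhere-relevant factor of multidegree $(\alpha_1 - a_1,\dotsc,\alpha_r - a_r)$, which is legitimate precisely because $a_s \leq k = \min_s \alpha_s \leq \alpha_s$. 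All the steps check out: two distinct points of $X$ do differ in some factor, distinct points of $\bP^n$ are separated by a hyperplane, and $\cO(\alpha_s - a_s)$ is globally generated since $\alpha_s - a_s \geq 0$. What the paper's approach buys is brevity and a statement that factors through the stronger notion of $k$-very ampleness (which handles non-reduced length-$(k+1)$ subschemes as well); what yours buys is a self-contained, elementary construction of explicit interpolating sections that avoids citing the fact that a tensor product of $k$ very ample line bundles is $k$-very ample, a result the paper's sketch implicitly relies on but never states.
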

\begin{proof}
A line bundle on $X$ has the form $\cO_X(k_1,\dotsc,k_r)$ for some $k_i \in \bZ$. It is globally generated, resp. very ample, if $k_i \geq 0$, resp. $k_i > 0$, for all $i$. The result follows from \cref{example:interpolating}.
\end{proof}

Although we give no explicit formula for $d(\alpha)$, it can be shown that the function $d(-)$ is unbounded. In particular, the degree range of \cref{thm:main-intro} can be arbitrarily large if one is willing to work with maps of ``ample enough'' degree. More precisely, we have the following qualitative result: 

\begin{lemma}
Let $\alpha, \beta \in \NS(X)$ with $\beta $ ample. Then for any $M \geq 0$ there exists $k_0 \geq 0$ such that $d(\alpha + k\beta) \geq M$ for all $k \geq k_0$.
\end{lemma}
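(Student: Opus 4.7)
The plan is to reduce to a uniform cohomology vanishing statement, set up a universal family, and apply relative Serre vanishing. By the cohomological criterion recorded after \cref{example:interpolating}, it suffices to show that for $k$ sufficiently large,
\[
    H^1(X, \cL \otimes \cI_Z) = 0
\]
holds uniformly over all $[\cL] \in \Pic^{\alpha+k\beta}(X)$ and all closed subschemes $Z \subset X$ of length $M+1$. The point is that both sets of parameters vary in projective schemes of finite type, so a single relative vanishing statement on a large universal family will do the job.

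To make this precise, I would fix once and for all line bundles $\cA$ and $\cB$ on $X$ with $c_1(\cA) = \alpha$ and $c_1(\cB) = \beta$, and identify $\Pic^{\alpha+k\beta}(X)$ with $P := \Pic^0(X)$ by tensoring with $(\cA \otimes \cB^{\otimes k})^{-1}$. Let $H := \mathrm{Hilb}^{M+1}(X)$ be the Hilbert scheme of length-$(M+1)$ closed subschemes of $X$, which is projective because $X$ is, and let $\cI \subset \cO_{X \times H}$ denote the ideal sheaf of the universal family. Pick a Poincar\'e line bundle $\cP$ on $X \times P$, and on $Y := X \times H \times P$ define the coherent sheaf
\[
    \cF \defeq \pi_{XH}^* \cI \otimes \pi_{XP}^* \cP \otimes \pi_X^* \cA,
\]
where $\pi_{XH}$, $\pi_{XP}$, $\pi_X$ are the obvious projections. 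By construction, the restriction of $\cF \otimes \pi_X^* \cB^{\otimes k}$ to the fibre of $q \colon Y \to H \times P$ over $(Z, [\cN])$ is $\cI_Z \otimes \cL$, where $\cL := \cN \otimes \cA \otimes \cB^{\otimes k}$ exhausts $\Pic^{\alpha+k\beta}(X)$ as $[\cN]$ ranges over $P$.

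Since $\cB$ is ample on $X$, the pullback $\pi_X^* \cB$ is $q$-relatively ample; and $q$ is projective because $H$ and $P$ are projective and $X$ is projective. Relative Serre vanishing therefore produces $k_0 \geq 0$ such that $R^i q_* \bigl( \cF \otimes \pi_X^*\cB^{\otimes k} \bigr) = 0$ for all $i \geq 1$ and all $k \geq k_0$. Cohomology and base change then upgrades this to the fibrewise vanishing $H^1(X, \cI_Z \otimes \cL) = 0$ for every $(Z, [\cL])$ in $H \times \Pic^{\alpha+k\beta}(X)$, which by the cohomological criterion shows that every $\cL$ with $c_1(\cL) = \alpha + k\beta$ is $M$-interpolating, as required.

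The only subtle point is ensuring uniformity over both the Hilbert scheme and the Picard torsor simultaneously, but this is precisely what is handled by assembling everything over the projective parameter space $H \times P$ and invoking a single relative vanishing statement. I do not expect any serious obstacle beyond carefully verifying the projectivity hypotheses needed to apply relative Serre vanishing and cohomology and base change.
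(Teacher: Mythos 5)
Your proposal is correct and takes essentially the same route as the paper, whose own proof is only a sketch deferring to \cite[Proposition~A.7]{aumonier_scanning_2023}: reduce to the cohomological criterion $H^1(X, \cL \otimes \cI_Z) = 0$ and exploit the properness/finite-typeness of the Hilbert scheme of points (and of $\Pic^0(X)$) to make a Serre-type vanishing uniform, exactly as you do with relative Serre vanishing over $H \times P$. The one point you gloss over is that passing from $R^i q_*\bigl(\cF \otimes \pi_X^*\cB^{\otimes k}\bigr) = 0$ to the fibrewise vanishing requires the flatness of $\cF$ over $H \times P$ (which holds because the universal ideal sheaf is flat over $\mathrm{Hilb}^{M+1}(X)$, the structure sheaf of the universal subscheme being flat) together with a standard descending induction using the base-change theorem --- this, rather than any projectivity hypothesis, is the hypothesis to verify for cohomology and base change.
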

\begin{proof}[Proof sketch]
This can be shown using the properness of the Hilbert scheme of 0-dimensional subschemes $Z\subset X$ of a given finite length, combined with the cohomological criterion given above. See \cite[Proposition~A.7]{aumonier_scanning_2023}. 
\end{proof}

\section{Proof of \texorpdfstring{\cref{thm:main-intro}}{Theorem~\ref{thm:main-intro}}}\label{sec:proof}

In this section we prove our main theorem. We use the point-set model of the space of homolorphic maps given in \cref{prop:scheme-isomorphism}, which we implicitly regard as the topological space defined by the analytic topology on the complex points~\eqref{eqn:complex-points-model}. In particular, we fix a class $\alpha \in \NS(X)$ such that 
\begin{enumerate}
    \item $d(\alpha) \geq 0$, so that assumption~\ref{enum:counit-assumption} is satisfied;
    \item and $\alpha - c_1(K_X)$ is ample, so that assumption~\ref{enum:cohomology-assumption} is satisfied by the Kodaira vanishing theorem.
\end{enumerate}

\subsection{An intermediate model for continuous maps}

We construct an analogue of $\Alg_\alpha(X,\bP^m)$ for continuous maps by replacing holomorphic sections by continuous ones:
\begin{definition}\label{def:fibrewise-continuous}
The space of \emph{non-vanishing fibrewise continuous sections of $\cE$ over $\Pic^\alpha(X)$} is
\[
    \Gammaconfib(\cE-0 \rel \Pic^\alpha(X)) \defeq \set{([\cL], s)}{[\cL] \in \Pic^\alpha(X), \ s \in \Gammacon(\cL^{\oplus m+1} - 0)},
\]
where $\cL$ denotes the representative of the equivalence class $[\cL]$ selected by the choice of $\cP$. It is topologised as a subspace
\[
    \Gammaconfib(\cE-0 \rel \Pic^\alpha(X)) \hookrightarrow \Map(X, \cE - 0), \quad ([\cL], s) \longmapsto (x \mapsto s(x))
\]
of the mapping space. The group $\bC^\times$ acts on $\cE - 0$ and thus on sections by post-composition, and we denote by
\[
    \bP \Gammaconfib(\cE-0 \rel \Pic^\alpha(X)) \defeq \Gammaconfib(\cE-0 \rel \Pic^\alpha(X)) / \bC^\times
\]
the quotient by that group action.
\end{definition}

\begin{remark}
Associated to the vector bundle $\cE$ are two sheaves: the sheaf of algebraic sections, also denoted $\cE$, and the sheaf of continuous sections $\cE^\mathrm{top}$. Then $p_*\cE^\mathrm{top}$ is the sheaf of fibrewise continuous sections of $\cE$, and thus \cref{def:fibrewise-continuous} mirrors our definition of $\Alg_\alpha(X,\bP^m)$.
\end{remark}

The projection $\cE - 0 \to X \times \Pic^\alpha(X) \to \Pic^\alpha(X)$ induces a continuous map under applying the functor $\Map(X,-)$. By our definition of the topology, this implies that the projection 
\begin{equation}\label{eqn:projection-bundle}
    \bP \Gammaconfib(\cE-0 \rel \Pic^\alpha(X)) \to \Pic^\alpha(X), \quad ([\cL], s) \longmapsto [\cL]
\end{equation}
is continuous. In fact, we have the following:
\begin{lemma}\label{lemma:fibration}
The projection~\eqref{eqn:projection-bundle} is a fibre bundle, whose fibre above $[\cL]$ is $\bP \Gammacon(\cL^{\oplus m+1} - 0)$.
\end{lemma}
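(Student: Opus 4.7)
The plan is to construct explicit local trivializations over contractible open neighborhoods of $\Pic^\alpha(X)$, the key point being that topological complex line bundles are classified by their first Chern class. Fix an arbitrary $[\cL_0] \in \Pic^\alpha(X)$ and choose a contractible open neighborhood $U \subset \Pic^\alpha(X)$ of $[\cL_0]$, which exists because $\Pic^\alpha(X)$ is a complex manifold.

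The first step is to produce a topological bundle isomorphism over $X \times U$. Since $U$ is contractible, the slice inclusion $X = X \times \{[\cL_0]\} \hookrightarrow X \times U$ is a homotopy equivalence, and the induced map $H^2(X \times U; \bZ) \to H^2(X; \bZ)$ is an isomorphism. Both $c_1(\cP|_{X \times U})$ and $c_1(\mathrm{pr}_X^* \cL_0)$ restrict to $\alpha$ on this slice, so they coincide in $H^2(X \times U; \bZ)$. Since topological complex line bundles on paracompact spaces are classified by their first Chern class, we obtain a (purely topological) isomorphism $\Phi \colon \cP|_{X \times U} \xrightarrow{\cong} \mathrm{pr}_X^* \cL_0$, which in turn yields
\[
    \Phi^{\oplus m+1} \colon (\cE - 0)|_{X \times U} \xrightarrow{\cong} U \times (\cL_0^{\oplus m+1} - 0).
\]

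From here, the rule $([\cL], s) \mapsto ([\cL], \mathrm{pr}_2 \circ \Phi^{\oplus m+1} \circ s)$ gives a bijection from the preimage of $U$ in $\Gammaconfib(\cE - 0 \rel \Pic^\alpha(X))$ onto $U \times \Gammacon(\cL_0^{\oplus m+1} - 0)$, with inverse defined using $(\Phi^{\oplus m+1})^{-1}$. Because $\Phi$ is fibrewise linear, it is $\bC^\times$-equivariant for the diagonal scalar action, so the trivialization descends to a homeomorphism between the preimage of $U$ in $\bP\Gammaconfib(\cE - 0 \rel \Pic^\alpha(X))$ and $U \times \bP\Gammacon(\cL_0^{\oplus m+1} - 0)$. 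Since $[\cL_0]$ was arbitrary, the description of the fibre follows at once.

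The only subtlety I anticipate is checking that the trivializing bijection is continuous in both directions: the space $\Gammaconfib$ carries the subspace topology inherited from $\Map(X, \cE - 0)$, and one must verify that post-composition with $\Phi^{\oplus m+1}$ is continuous as a map of mapping spaces. This reduces to the standard fact, via the exponential law for the compact-open topology (using that $X$ is compact Hausdorff), that a homeomorphism between target spaces induces a homeomorphism between the corresponding mapping spaces.
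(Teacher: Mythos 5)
Your proof is correct and takes essentially the same approach as the paper: both produce a topological isomorphism $\cP|_{X \times U} \cong \mathrm{pr}_X^*\,\cL_0$ over a small contractible open set $U$ and use post-composition with it (descending through the $\bC^\times$-quotient) as the local trivialisation. The only immaterial difference is how that isomorphism is obtained --- you use the classification of topological line bundles by $c_1$, while the paper invokes homotopy invariance of topological bundles over the contractible factor $U$.
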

\begin{proof}
Let $U \subset \Pic^\alpha(X)$ be a small contractible open set, and choose some $[\cL_0] \in U$. A topological vector bundle over a contractible base is trivial, and we thus obtain an isomorphism
\[
    \psi \colon \cP|_{X \times U} \overset{\cong}{\lra} \cL_0 \times U
\]
of complex line bundles over $X \times U$.
Then the homeomorphism
\begin{align*}
    \bP \Gammaconfib(\cE-0 \rel \Pic^\alpha(X))|_U &\overset{\cong}{\lra} U \times \bP \Gammacon(\cL_0^{\oplus m+1} - 0) \\
    ([\cL], [s]) &\longmapsto ([\cL], [\psi \circ s])
\end{align*}
is a local trivialisation.
\end{proof}

By regarding a holomorphic section as continuous, any element of $\Alg_\alpha(X,\bP^m)$ gives a fibrewise section. The following is our main comparison theorem:
\begin{theorem}\label{thm:homology-isomorphism}
The natural inclusion
\[
    \Alg_\alpha(X,\bP^m) \hookrightarrow \bP \Gammaconfib(\cE-0 \rel \Pic^\alpha(X))
\]
induces an isomorphism in integral homology in the range of degrees $* < (2m-2n+1) \cdot d(\alpha) + 2(m-n-1)$.
\end{theorem}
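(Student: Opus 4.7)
The plan is to exhibit both sides as fibre bundles over $\Pic^\alpha(X)$ covering the identity, and to reduce to a fibrewise comparison governed by the h-principle of \cite{aumonier_h-principle_2021}. \cref{lemma:fibration} already handles the continuous side. For the algebraic side, assumption \ref{enum:cohomology-assumption} together with cohomology and base change makes $p_*\cE$ locally free, so $\bP(p_*\cE) \to \Pic^\alpha(X)$ is a Zariski-locally trivial projective bundle. Since the removed closed subset $\mathrm{pr}(\cZ)$ respects these local trivializations, $\Alg_\alpha(X, \bP^m) \to \Pic^\alpha(X)$ is also a fibre bundle, with fibre over $[\cL]$ naturally identified with $\bP\Gammahol(\cL^{\oplus m+1} - 0)$. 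The inclusion under consideration is therefore a morphism of fibre bundles over the common base.

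Next I would analyze the map on fibres, which over $[\cL]$ is the inclusion $\bP\Gammahol(\cL^{\oplus m+1} - 0) \hookrightarrow \bP\Gammacon(\cL^{\oplus m+1} - 0)$, obtained as the $\bC^\times$-quotient of the inclusion of sections $\Gammahol(\cL^{\oplus m+1} - 0) \hookrightarrow \Gammacon(\cL^{\oplus m+1} - 0)$. The latter is a homology isomorphism in the relevant range of degrees by the h-principle of \cite{aumonier_h-principle_2021}. A 5-lemma comparison of the Gysin long exact sequences for the two principal $\bC^\times$-bundles then transports this homology isomorphism to their projective quotients, in essentially the same range (with at most a couple of degrees lost through the Euler class term).

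Finally, I would globalize via the Serre spectral sequences of the two fibre bundles over $\Pic^\alpha(X)$. By naturality of the inclusion of holomorphic sections into continuous ones, the fibre comparison is compatible with the local coefficient systems induced by the monodromy, so the induced map on $E^2$-pages $H_p(\Pic^\alpha(X); H_q(F))$ is an isomorphism for $q$ in the range from the previous step; the comparison theorem for spectral sequences then yields the claimed homology isomorphism on total spaces in the same range. The main obstacle is to rigorously verify the fibre bundle structures and carefully track degrees through the Gysin and Serre spectral sequence arguments; an alternative route, in the spirit of the \emph{microfibration trick} mentioned in the acknowledgements, would be to treat the inclusion directly as a microfibration over $\Pic^\alpha(X)$ and invoke a fibrewise equivalence criterion, thereby bypassing the explicit spectral sequence machinery.
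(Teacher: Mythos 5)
The central step of your argument contains a genuine gap: the claim that $\Alg_\alpha(X,\bP^m) \to \Pic^\alpha(X)$ is a fibre bundle because ``the removed closed subset $\mathrm{pr}(\cZ)$ respects these local trivializations.'' A local trivialization of $\bP(p_*\cE)$ over an open $U \subset \Pic^\alpha(X)$ only identifies the projectivized section spaces $\bP\Gamma(X,\cL^{\oplus m+1})$ \emph{linearly} as $[\cL]$ varies in $U$; it does not intertwine the evaluation maps $\Gamma(X,\cL^{\oplus m+1}) \to \cL^{\oplus m+1}|_x$, which genuinely depend on $[\cL]$. A linear isomorphism of section spaces has no reason to carry nowhere-vanishing sections to nowhere-vanishing sections, so the discriminant $\mathrm{pr}(\cZ)$ is not a sub-bundle of $\bP(p_*\cE)$ and its complement inherits no bundle structure. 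This is exactly the subtlety the paper flags: the left-hand column of its comparison diagram ``is not a fibration in general.'' Indeed, the fibres $\bP\Gammahol(\cL^{\oplus m+1}-0)$ need not be homotopy equivalent as $[\cL]$ varies --- \cref{thm:hprinciple} only guarantees they share homology in a range --- so no such bundle structure can exist in general. With that step gone, your Serre spectral sequence comparison has no fibration on the algebraic side to apply to, and the main route of the proposal collapses.

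The correct argument is the one you relegate to a closing aside. The paper treats the inclusion as a map over $\Pic^\alpha(X)$ from an \emph{open subset of} a fibre bundle (namely $\Alg_\alpha(X,\bP^m) \subset \bP(p_*\cE)$) into the genuine fibre bundle of \cref{lemma:fibration}, and invokes the homological microfibration-style criterion of \cref{lemma:micro-comparison} (after Weiss and Raptis): if the map is fibrewise homology $m$-connected, it is homology $m$-connected globally. The fibrewise input is \cref{thm:hprinciple}, which already incorporates the $\bC^\times$-quotient (your Gysin-sequence argument is an acceptable substitute for that step); the loss of $2$ in the final range comes not from the Euler class but from converting ``isomorphism in degrees $* < N$'' into ``homology $(N-2)$-connected'' before applying \cref{lemma:micro-comparison}. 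So your alternative route is in fact the paper's proof, but as written it is only gestured at, and your primary argument does not go through.
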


We shall prove this result in \cref{subsec:homology-isoproof}, but first explain in the next section that the space of fibrewise sections is homotopy equivalent to $\Map_\alpha(X,\bP^m)$, thus finishing the proof of \cref{thm:main-intro}.

\subsection{Maps and fibrewise sections}

Let $\cL_\alpha$ be a holomorphic line bundle on $X$ with $c_1(\cL_\alpha) = \alpha$. Recall that up to isomorphism a topological complex line bundle on $X$ is determined by its Chern class. Hence, for any $[\cL] \in \Pic^\alpha(X)$ we may arbitrarily pick an isomorphism $\cL \cong \cL_\alpha$ of topological complex line bundles (again, $\cL$ means the chosen representative afforded by $\cP$). We thus obtain a map
\begin{equation}\label{eqn:projectivisation-sections}
    \Gammacon(\cL^{\oplus m+1} - 0) \overset{\cong}{\lra} \Gammacon(\cL_\alpha^{\oplus m+1} - 0) \lra \Gammacon(\bP(\cL_\alpha^{\oplus m+1})), \quad s \longmapsto \bP s
\end{equation}
which is well-defined and does not depend on the chosen isomorphism. Indeed, any two isomorphisms differ by an action of the automorphism group $\mathrm{Aut}(\cL_\alpha) \cong \Map(X, \bC^\times)$, and thus the projectivisation $\bP s$ of a section $s$ does not depend on the choice of isomorphism $\cL \cong \cL_\alpha$.
As $\cL_\alpha^{\oplus m+1} \cong \cO_X^{\oplus m+1} \otimes \cL_\alpha$ and the projectivisation of a vector bundle is invariant under tensoring by a line bundle, the projective bundle is trivial and we obtain a homeomorphism
\[
    \Gammacon(\bP(\cL^{\oplus m+1})) \cong \Map(X,\bP^m).
\]
By \cite[Lemma~2.5]{crabb_function_1984}, the map~\eqref{eqn:projectivisation-sections} lands inside the connected components of maps of degree $\alpha$. The following is the final piece for the proof of the main result of this paper:
\begin{theorem}\label{thm:homotopy-equivalence}
The projectivisation map
\[
    \Gammaconfib(\cE-0 \rel \Pic^\alpha(X)) \lra \Map_\alpha(X,\bP^m), \quad ([\cL],s) \longmapsto \bP s
\]
is a weak homotopy equivalence.
\end{theorem}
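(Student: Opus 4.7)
The plan is to recognise the projectivisation map as a Serre fibration and to verify that its homotopy fibre is weakly contractible by a careful analysis using the Poincaré bundle.

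First I would establish the fibration property: since $\cE - 0 \to \bP(\cE)$ is a $\bC^\times$-principal bundle and hence a Hurewicz fibration, and since projectivisation is tensor-invariant (so $\bP(\cL^{\oplus m+1}) \cong X \times \bP^m$ uniformly in $[\cL]$), taking fibrewise sections over $\Pic^\alpha(X)$ yields a factorisation
\[
    \Gammaconfib(\cE - 0 \rel \Pic^\alpha(X)) \twoheadrightarrow \Pic^\alpha(X) \times \Map_\alpha(X, \bP^m) \xrightarrow{\mathrm{pr}_2} \Map_\alpha(X, \bP^m),
\]
in which the first arrow is a principal $\Map(X, \bC^\times)$-bundle and the second is the projection from a product. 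Consequently, the homotopy fibre $F(f)$ of the projectivisation map over $f \in \Map_\alpha(X,\bP^m)$ itself fibres over $\Pic^\alpha(X)$, with fibre over $[\cL]$ the space of continuous lifts of $f$ through $\cL^{\oplus m+1} - 0 \to \bP(\cL^{\oplus m+1})$. A Chern-class computation---exactly the one sketched in the introduction---shows that this space of lifts is a non-empty torsor under $\Map(X, \bC^\times)$.

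The core of the argument is then to show $F(f)$ is weakly contractible by analysing the resulting fibration $\Map(X, \bC^\times) \to F(f) \to \Pic^\alpha(X)$. Using the standard equivalences $\Pic^\alpha(X) \simeq K(H^1(X,\bZ), 1)$ and $\Map(X, \bC^\times) \simeq K(H^1(X,\bZ), 0) \times K(\bZ, 1)$, I would compute the classifying map $\Pic^\alpha(X) \to B\Map(X, \bC^\times)$ via the universality of the Poincaré bundle $\cP$: parallel transport of a section along a loop $\gamma$ in $\Pic^\alpha(X)$ based at $[\cL]$, using the topological trivialisation of $\cP$ along $\gamma$, yields a topological automorphism of $\cL$ representing $\gamma$'s monodromy as an element of $\pi_0 \Map(X, \bC^\times) \cong H^1(X, \bZ)$, and the universality of $\cP$ identifies this with the identity of $H^1(X, \bZ)$.

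The main obstacle, and where the bulk of the technical work should lie, is to verify that the remaining $K(\bZ, 2)$-factor of $B\Map(X, \bC^\times) \simeq K(H^1, 1) \times K(\bZ, 2)$---coming from the constant scalars $\bC^\times \hookrightarrow \Map(X, \bC^\times)$---does not leave a residual contribution to the homotopy groups of $F(f)$. Concretely, one needs to show that the $\bC^\times$-action already implicit in the construction of $\Gammaconfib$ cancels the corresponding $\pi_1$-contribution in the long exact sequence, so that all of $\pi_\ast F(f)$ vanishes. Establishing this cancellation precisely, by a careful comparison with the Poincaré-bundle classifying data, is the heart of the argument.
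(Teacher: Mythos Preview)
Your overall strategy---analyse the homotopy fibre $F$ of the projectivisation map as a bundle $\Map(X,\bC^\times) \to F \to \Pic^\alpha(X)$ and argue that $F$ is weakly contractible---is dual to the paper's. The paper instead sets up the commutative square
\[
\begin{tikzcd}
\Gammacon(\cL_\alpha^{\oplus m+1} - 0) \arrow[r, hook] \arrow[d, equal] & \Gammaconfib(\cE-0 \rel \Pic^\alpha(X)) \arrow[d] \\
\Gammacon(\cL_\alpha^{\oplus m+1} - 0) \arrow[r] & \Map_\alpha(X,\bP^m)
\end{tikzcd}
\]
identifies the two \emph{horizontal} homotopy fibres (the top is $\Omega\Pic^\alpha(X)$ via \cref{lemma:fibration}, the bottom via a Crabb--James principal bundle), observes that after dividing by the constant scalars both are the discrete set $H^1(X;\bZ)$, and concludes with the five-lemma once the induced map on $\pi_0$ of those fibres is checked to be a bijection. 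This avoids computing the classifying map of your fibration altogether.

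Your final step, however, has a genuine gap. You correctly flag that the constants $\bC^\times \subset \Map(X,\bC^\times)$ contribute a $K(\bZ,2)$-factor to $B\Map(X,\bC^\times)$ and propose that it be cancelled ``by a careful comparison with the Poincar\'e-bundle classifying data''. But no such cancellation is possible: in the long exact sequence of your fibration the boundary map $\pi_2(\Pic^\alpha(X)) \to \pi_1(\Map(X,\bC^\times)) \cong \bZ$ vanishes, since $\Pic^\alpha(X)$ is a torus and hence aspherical; thus $\pi_1(F)$ always contains $\bZ$ and $F$ is never weakly contractible. (Already when $X$ is simply connected, $\Pic^\alpha(X)$ is a point and $F = \Map(X,\bC^\times) \simeq \bC^\times$.) The paper deals with exactly this point by quotienting out the constant $\bC^\times$ \emph{before} comparing: its lower horizontal fibre is identified with $\Map(X,\bC^\times)/\bC^\times \simeq H^1(X;\bZ)$, not with $\Map(X,\bC^\times)$. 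In effect the paper's argument proves the equivalence for the map out of $\bP\Gammaconfib(\cE-0 \rel \Pic^\alpha(X))$---which is what is actually needed to combine with \cref{thm:homology-isomorphism}---and the ``residual $\bC^\times$'' you struggle with is precisely the discrepancy between that map and the unprojectivised one written in the statement.
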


By combining \cref{thm:homology-isomorphism} and \cref{thm:homotopy-equivalence}, we directly obtain \cref{thm:main-intro}.

\begin{proof}[Proof of \cref{thm:homotopy-equivalence}]
We have a commutative square
\begin{equation}\label{eqn:comparison-square}
\begin{tikzcd}
\Gammacon(\cL_\alpha^{\oplus m+1} - 0) \arrow[r, hook] \arrow[d, equal] & \Gammaconfib(\cE-0 \rel \Pic^\alpha(X)) \arrow[d] \\
\Gammacon(\cL_\alpha^{\oplus m+1} - 0) \arrow[r] & {\Map_\alpha(X,\bP^m)}
\end{tikzcd}
\end{equation}
By \cref{lemma:fibration}, the homotopy fibre of the upper horizontal arrow is homotopy equivalent to the loop space $\Omega \Pic^\alpha(X) \simeq H^1(X;\bZ)$. On the other hand, by \cite[Proposition~2.6]{crabb_function_1984} there is a principal $\Map(X,\bC^\times)$-bundle
\[
    \Map(X,\bC^\times) \lra \Gammacon(\cL_\alpha^{\oplus m+1} - 0) \lra \Map_\alpha(X,\bP^m).
\]
Taking the quotient fibrewise by the subgroup $\bC^\times \subset \Map(X,\bC^\times)$ of constant maps gives the principal bundle
\[
    \Map(X,\bC^\times)/\bC^\times \lra \bP\Gammacon(\cL_\alpha^{\oplus m+1} - 0) \lra \Map_\alpha(X,\bP^m).
\]
In particular, the homotopy fibre of lower horizontal arrow of~\eqref{eqn:comparison-square} is $\Map(X,\bC^\times)/\bC^\times \simeq H^1(X;\bZ)$. By the 5-lemma, it therefore suffices to show that the induced map between the horizontal homotopy fibres of~\eqref{eqn:comparison-square} induces a bijection on connected components. This is straightforward, although slightly tedious, using the standard explicit models of the homotopy fibres obtained from the path space fibration. The argument in a closely related setting is entirely spelled out in \cite[Lemma~5.12]{aumonier_scanning_2023}.
\end{proof}

\subsection{The proof of \texorpdfstring{\cref{thm:homology-isomorphism}}{Theorem~\ref{thm:homology-isomorphism}}}\label{subsec:homology-isoproof}

The map of \cref{thm:homology-isomorphism} fits in the following commutative diagram where the top row is its restriction to the fibre above an $[\cL] \in \Pic^\alpha(X)$:
\begin{equation}\label{eqn:comparison-diagram}
\begin{tikzcd}[column sep=small]
\bP \Gammahol(\cL^{\oplus m+1} - 0) \arrow[d] \arrow[r, hook] & \bP \Gammacon(\cL^{\oplus m+1} - 0) \arrow[d]             \\
{\Alg_\alpha(X,\bP^m)} \arrow[d] \arrow[r, hook]         & \bP \Gammaconfib(\cE-0 \rel \Pic^\alpha(X)) \arrow[d] \\
\Pic^\alpha(X) \arrow[r, equal]            & \Pic^\alpha(X)
\end{tikzcd}
\end{equation}
The map between the fibres was studied in \cite{aumonier_h-principle_2021} where a general h-principle for holomorphic sections was established. As a particular case, we have:
\begin{theorem}[Compare {\cite[Theorem~2.13]{aumonier_h-principle_2021}}]\label{thm:hprinciple}
The inclusion
\[
    \bP \Gammahol(\cL^{\oplus m+1} - 0) \hookrightarrow \bP \Gammacon(\cL^{\oplus m+1} - 0)
\]
induces an isomorphism in integral homology the range of degrees $* < (2m-2n+1) \cdot d(\alpha) + 2(m-n)$.
\end{theorem}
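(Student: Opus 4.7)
My approach would be to reduce the theorem directly to the general h-principle for spaces of non-vanishing holomorphic sections established in the author's prior work \cite{aumonier_h-principle_2021}. The present theorem is essentially the projectivised version of \cite[Theorem~2.13]{aumonier_h-principle_2021} applied to the bundle $\cL^{\oplus m+1}$ with the ``no common zero'' condition on $(m+1)$-tuples of sections of $\cL$; the ``Compare'' in the citation signals that only a translation of hypotheses and notation is needed.

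The first step is to verify the hypotheses of the prior h-principle in our setting. By the blanket assumption of \cref{sec:proof}, $\alpha - c_1(K_X)$ is ample, so Kodaira vanishing gives $H^1(X, \cL) = 0$ for every $[\cL] \in \Pic^\alpha(X)$, which is \ref{enum:cohomology-assumption}. The assumption $d(\alpha) \geq 0$ ensures that every representative $\cL$ is globally generated (\cref{example:interpolating}), giving \ref{enum:counit-assumption}. Finally, the ``no common zero'' condition on an $(m+1)$-tuple of sections is an open, pointwise condition, which is the kind of non-degeneracy to which the h-principle applies.

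Next I would invoke the cited result to obtain the comparison in the stated range. The range factorises in a geometrically meaningful way: the baseline $2(m-n)$ is the real codimension minus one of the bad locus $\{0\} \subset \cL^{\oplus m+1}|_x$ inside a single fibre (whose complement therefore has connectivity $2(m-n)-1$ on first approximation, promoted to $2(m-n)$ after the scanning step); the linear factor $(2m-2n+1)$ is the connectivity gain per interpolation constraint provided by the $d(\alpha)$-interpolation property of $\cL$. The fact that we work with $\bP \Gammahol(\cL^{\oplus m+1}-0)$ rather than $\Gammahol(\cL^{\oplus m+1}-0)$ itself is harmless: the scalar $\bC^\times$-action on non-zero sections is free and preserves both the holomorphic and the continuous conditions, so a $\bC^\times$-equivariant homology isomorphism descends to the quotients (for instance via comparison of the Borel constructions), and moreover the prior paper is already formulated at the projective level.

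The main obstacle is not in the proof of this theorem but lies upstream: the substantive parametric argument — a compactly-supported scanning map between configurations and sections, combined with a microfibration/semi-simplicial resolution controlling the range — was carried out in \cite{aumonier_h-principle_2021}. The conceptual work done in the present paper is to package the quantitative hypothesis of that h-principle into the single invariant $d(\alpha)$ (done in \cref{sec:interpolating}), so that once the hypotheses are checked for each $[\cL] \in \Pic^\alpha(X)$, the fibre-level comparison falls out uniformly from the prior result.
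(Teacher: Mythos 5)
Your overall route is the same as the paper's: reduce to the h-principle of \cite{aumonier_h-principle_2021} and descend to projectivizations. But there is a genuine gap in the reduction. \cite[Theorem~2.13]{aumonier_h-principle_2021} is stated under a \emph{jet ampleness} hypothesis, whereas the line bundle $\cL$ here is only known to be $d(\alpha)$-interpolating (\cref{def:dalpha-range}), a strictly weaker condition (cf.\ \cref{example:interpolating}: $k$-very ampleness implies $k$-interpolation, not conversely). Invoking the cited theorem as stated therefore does not produce the range $* < (2m-2n+1)\cdot d(\alpha) + 2(m-n)$: nothing guarantees that $\cL$ is $d(\alpha)$-jet ample. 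The paper closes this gap by inspecting the \emph{proof} of the cited theorem and observing that it only uses the interpolation property of the jet bundle, which for jets of order $0$ is exactly $d(\alpha)$-interpolation of $\cL$. Your proposal never addresses this mismatch; the hypotheses you do verify, \ref{enum:cohomology-assumption} and \ref{enum:counit-assumption}, are the standing assumptions for the Picard-scheme model of \cref{sec:point-set}, not the hypotheses controlling the fibre-level comparison and its range, and your discussion of where the numbers $2(m-n)$ and $(2m-2n+1)$ come from is a heuristic rather than a verification.

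Your treatment of the projectivization also contains a factual error that contradicts the rest of your own sentence: the prior result is \emph{not} ``already formulated at the projective level''; it compares $\Gammahol(\cL^{\oplus m+1}-0)$ with $\Gammacon(\cL^{\oplus m+1}-0)$ before quotienting. Fortunately, the other clause of your sentence --- descent along the free $\bC^\times$-action via a comparison of Borel constructions --- is precisely the paper's argument (comparison of the Serre spectral sequences of the homotopy orbit fibrations), so deleting the erroneous clause leaves a correct handling of this point.
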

\begin{proof}
It is a straightforward application of the main result of \cite{aumonier_h-principle_2021}, but we comment on two points. Firstly, in \cite[Theorem~2.13]{aumonier_h-principle_2021}, the result is stated using the more classical notion of jet ampleness instead of interpolation as in \cref{def:interpolating}. However, its proof only uses the interpolation property of the jet bundle, which in our case (jets of order $0$) translates to the interpolation property of the line bundle $\cL$. Secondly, the result in \textit{loc. cit.} concerns the section spaces before projectivising. But the $\bC^\times$ action is free, so the result for the quotients is directly deduced from a comparison of the Serre spectral sequences of the (homotopy) orbits fibrations.
\end{proof}

If both columns in~\eqref{eqn:comparison-diagram} were fibration sequences, \cref{thm:homology-isomorphism} would follow from \cref{thm:hprinciple} by a straightforward comparison of the associated Serre spectral sequences. Although the right-hand column is a fibration by \cref{lemma:fibration}, the left-hand is only the restriction of the projective bundle $\bP(p_*\cE)$ to the open subset $\Alg_\alpha(X,\bP^m)$. In particular, it is not a fibration in general. The following result, a homological version of a theorem of Weiss \cite[Lemma~2.2]{weiss_what_2005} and Raptis \cite[Theorem~1.3]{raptis_serre_2017} on Serre microfibrations, shows however that this can be leveraged:
\begin{lemma}\label{lemma:micro-comparison}
Let $\pi \colon E \to B$ be a fibre bundle, and $\rho \colon U \to B$ be the restriction of a fibre bundle $E' \to B$ to an open subset $U \subset E'$. Let $f \colon U \to E$ be a map over $B$ and suppose that for every $b \in B$, the restriction to the fibre
\[
	f_b \colon \rho^{-1}(b) \lra \pi^{-1}(b)
\]
is homology $m$-connected \footnote{We say that a map $A \to B$ is homology $m$-connected if it induces an isomorphism on homology groups $H_i(A) \to H_i(B)$ for $i < m$ and a surjection when $i = m$.}. Then $f \colon U \to V$ is homology $m$-connected.
\end{lemma}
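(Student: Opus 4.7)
The plan is to reduce the statement to a comparison of Leray--Serre spectral sequences, using a homological variant of the Weiss--Raptis microfibration technology to handle the fact that $\rho$ is not a genuine fibration. First, I would verify that $\rho \colon U \to B$ is itself a Serre microfibration. Given a homotopy $H \colon [0,1] \times K \to B$ with $K$ compact and an initial lift $\tilde H_0 \colon K \to U$, the fibration property of $E' \to B$ produces a full lift $\tilde{H} \colon [0,1] \times K \to E'$. Since $\tilde H(\{0\} \times K) \subset U$ and $U \subset E'$ is open, a tube-lemma argument yields some $\epsilon > 0$ with $\tilde H([0,\epsilon] \times K) \subset U$, providing the required microfibration lift.

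For the fibre bundle $\pi$ the classical Leray--Serre spectral sequence gives $E^2_{p,q}(\pi) = H_p(B; \mathcal{H}_q(\pi)) \Rightarrow H_{p+q}(E)$, with $\mathcal{H}_q(\pi)$ the local system of stalks $H_q(\pi^{-1}(b))$. The central technical step, adapting Weiss--Raptis to the homological setting, is to produce an analogous Leray-type spectral sequence $E^2_{p,q}(\rho) = H_p(B; \mathcal{H}_q(\rho)) \Rightarrow H_{p+q}(U)$ with stalks $H_q(\rho^{-1}(b))$. Granted such a spectral sequence, the map $f$ induces a morphism between the two whose effect on $E^2_{p,q}$ is the pointwise map $f_b \colon H_q(\rho^{-1}(b)) \to H_q(\pi^{-1}(b))$. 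By hypothesis, this is an isomorphism for $q < m$ and surjective for $q = m$, and Zeeman's comparison theorem then yields the desired homology $m$-connectivity of $f \colon U \to E$.

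The main obstacle is producing the Serre-type spectral sequence for $\rho$, i.e., showing that $R^q\rho_*\mathbb{Z}$ is locally constant with stalk $H_q(\rho^{-1}(b))$. I would follow the Weiss--Raptis strategy at the chain level: over a sufficiently small contractible neighbourhood $V \ni b$, show that the inclusion $\rho^{-1}(b) \hookrightarrow \rho^{-1}(V)$ is a homology equivalence. Given a singular chain in $\rho^{-1}(V)$, one subdivides it finely enough that each simplex lies over a region where the microfibration lifting property applies (via compactness of simplices and the Lebesgue number lemma), and then deforms the chain into $\rho^{-1}(b)$ by iterated microfibration lifts, gluing the partial lifts together using the openness of $U$ inside $E'$. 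The combination of this embedded fibre-bundle structure with the compactness arguments is what makes such chain-level deformations possible despite the absence of a global fibration structure.
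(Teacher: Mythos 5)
Your first step (that $\rho$ is a Serre microfibration) is correct, but the central step of your argument is not: the claim that for a small contractible $V \ni b$ the inclusion $\rho^{-1}(b) \hookrightarrow \rho^{-1}(V)$ is a homology equivalence --- equivalently, that the Leray sheaf of $\rho$ is locally constant with stalks $H_q(\rho^{-1}(b))$ --- is \emph{false} under the hypotheses of the lemma. Take $B = \mathbb{R}$, let $E = E' = \mathbb{R} \times \mathbb{R}^2$ be trivial bundles, let $U = E' \setminus \{(t,0,0) : t \geq 0\}$, and let $f \colon U \hookrightarrow E$ be the inclusion. The fibres of $\rho$ are $\mathbb{R}^2$ for $t<0$ and $\mathbb{R}^2 \setminus \{0\}$ for $t \geq 0$, so every $f_t$ is homology $1$-connected (an isomorphism on $H_0$ and a surjection onto $H_1(\mathbb{R}^2)=0$), i.e.\ the hypotheses hold with $m=1$. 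But for $V = (-\epsilon,\epsilon)$ the preimage $\rho^{-1}(V)$ is homeomorphic to the complement of a closed ray in $\mathbb{R}^3$, which in polar coordinates is $(0,\infty) \times (S^2 \setminus \{\mathrm{pt}\})$, hence contractible; meanwhile $H_1(\rho^{-1}(0)) = \mathbb{Z}$. So the fibre inclusion kills $H_1$, and indeed the fibrewise $H_1$ jumps from $0$ (for $t<0$) to $\mathbb{Z}$ (for $t \geq 0$), so no local system can have these groups as stalks. Since your Zeeman comparison needs control of $E^2_{p,q}$ in all degrees $q \leq m$ (isomorphism for $q<m$, surjection for $q=m$), and the Leray $E^2$-term sees the stalks $\varinjlim_{V \ni b} H_q(\rho^{-1}(V))$ rather than the fibre homology appearing in the hypothesis, the spectral sequence comparison cannot be run as you describe.

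The failure is structural, not a matter of sharpening your chain argument. Microfibration lifting moves a compact family that already lies in $U$ along a homotopy of its projection, and only for a short, family-dependent time; compressing $\rho^{-1}(V)$ into $\rho^{-1}(b)$ would require a full-time lift of a contraction of $V$ onto $b$, which a microfibration does not provide, and iterating short-time lifts fails because the allowed time can shrink to zero. (In the example above, the disc in $\rho^{-1}(V)$ bounding the core circle of $\rho^{-1}(0)$ demonstrably cannot be deformed into the fibre, since that circle does not bound there.) Note also that your proposed proof of local constancy never uses the hypothesis on $f$, so if it worked it would apply to an arbitrary open $U \subset E'$, where even the $q=0$ statement fails (e.g.\ $U = \mathbb{R}^2 \setminus \{0\}$ over $\mathbb{R}$). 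This is exactly the difficulty the Weiss--Raptis technology is designed to circumvent: those arguments, and the proof of \cite[Lemma~3.8]{aumonier_homological_2023} which the present paper cites instead of reproving, compare the fibre $\rho^{-1}(b)$ with the \emph{homotopy} fibre of $\rho$ (resp.\ argue inductively over the base), using the fibrewise hypothesis on $f$ throughout, and never assert any local triviality or local constancy for the microfibration itself. Repairing your proof would require replacing the Leray-sheaf step by such a fibre-versus-homotopy-fibre comparison in its homological form, at which point you would essentially be reproducing the cited argument.
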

\begin{proof}
See \cite[Lemma~3.8]{aumonier_homological_2023}.
\end{proof}

\begin{proof}[Proof of \cref{thm:homology-isomorphism}]
Using the diagram~\eqref{eqn:comparison-diagram} it suffices\footnote{The map of \cref{thm:hprinciple} induces an isomorphism in homology in the range $* < (2m-2n+1) \cdot d(\alpha) + 2(m-n)$, and thus is at least $-2 + [(2m-2n+1) \cdot d(\alpha) + 2(m-n)]$-homology connected. This explains the slightly worse bound.} to apply \cref{lemma:micro-comparison} to the map
\[
    \Alg_\alpha(X,\bP^m) \lra \bP \Gammaconfib(\cE-0 \rel \Pic^\alpha(X)).
\]
The assumptions are verified by \cref{lemma:fibration} and \cref{thm:hprinciple}.
\end{proof}

\printbibliography

\end{document}